\newtheorem{lemma}{Lemma}
\newtheorem{proposition}{Proposition}
\newtheorem{theorem}{Theorem}
\newtheorem{corollary}{Corollary}
\newtheorem{remark}{Remark}
\def\R{\mathbb R}
\def\N{\mathbb N}
\def\p{\partial}
\DeclarePairedDelimiter\norm{\lVert}{\rVert}
\begin{document}
\title[Conditional stability for SIV]{Stability estimate for scalar image velocimetry}
\author{E. Burman  and J.~J.~J.~Gillissen and L. Oksanen}
\address{
Department of Mathematics,
University College London,\\
Gower Street, London, WC1E 6BT.\\
E-mail: e.burman@ucl.ac.uk,\, jurriaangillissen@gmail.com, l.oksanen@ucl.ac.uk}
\date{\today}
\begin{abstract}
In this paper we analyse the stability of the system of partial
differential equations modelling scalar image velocimetry.
We first revisit a successful numerical technique to reconstruct velocity vectors ${u}$ from images of a passive scalar field $\psi$
by minimising a cost functional, 
that penalises the difference between the reconstructed scalar field $\phi$
and the measured scalar field $\psi$, under the constraint that
$\phi$ is advected by the reconstructed velocity field ${u}$, 
which again is governed by the 
Navier-Stokes equations. We investigate the stability of the
reconstruction by
applying this method to synthetic scalar fields in two-dimensional turbulence, 
that are generated by numerical simulation. Then we present a
mathematical analysis of the nonlinear coupled problem and prove that,
in the two dimensional case, smooth solutions of the Navier-Stokes
equations are uniquely determined by the measured scalar field.
We
also prove a conditional stability estimate showing that 
the map from the measured scalar field $\psi$ to 
the reconstructed velocity field $u$, on any
interior subset, is H\"older continuous.
\end{abstract}
\maketitle
\section{Introduction}
\label{introduction}
Scalar image velocimetry (SIV) is the reconstruction of the fluid velocity field  ${u}$ 
from measurements of a scalar field $\psi$, that is advected by
${u}$. 
This idea, which dates back to the works \cite{FV84, Wunsch85,Wunsch87}, is applied in weather forecasting models 
using e.g. satellite images of clouds or ocean temperature. For a
background of the technique see for instance
\cite{kalnay2003atmospheric} and references therein. 

SIV also finds applications in medical flow imaging and in experimental fluid mechanics.
For instance in the laser induced fluorescence (LIF) technique, 
a fluid is seeded with fluorescent molecules 
and laser light is focused into a thin sheet, where it is absorbed by the fluorescent molecules 
followed by spontaneous emission of light which is then recorded by a
camera (see e.g. \cite{su1996scalar} and references therein). 
Assuming that the recorded light intensity is proportional to the 
fluorescence concentration $\psi$, 
the velocity field  ${u}:=u_1 e_x + u_2 e_y$, with $e_x,\, e_y$ the
Cartesian unit vectors, can be reconstructed by
invoking the scalar transport equation,
    \begin{align}\label{eq_for_psi}
\partial_t \psi
+u \cdot \nabla\psi -\lambda\Delta \psi =0,
    \end{align}
where $\lambda$ is the scalar diffusivity. A direct inversion of the scalar transport equation 
only provides the component $u^\bot$ of ${u}$ that is normal to $\psi$ isolines,
    \begin{align*}
{u}^{\bot}=(-\partial_t\psi+\lambda\Delta \psi) {\nabla} \psi/\vert {\nabla} \psi\vert^2.
    \end{align*}
Finding all components of ${u}$ requires additional constraints, for example, 
conservation of hydrodynamic variables. 

Inspired by recent developments on computational methods
for SIV \cite{gillissen2018space},
in this work we study
mathematically the stability of the map from $\psi$ to $u$. We show, in particular,
that under suitable a priori assumptions, the velocity field is
uniquely determined by the measured scalar field, thereby giving a
partial theoretical justification to the computational approaches. The
analysis uses conditional stability estimates as a workhorse for understanding of inverse problems in a spirit similar to that of the
seminal work by Bukhge\u{\i}m and Klibanov \cite{BK81}.

To fix a configuration for the stability analysis we consider a soap film experiment from \cite{gillissen2018space}.
In that work, a soap film with a thickness of $\sim 10\mu$ m is formed in the 10 cm gap between two vertical parallel nylon wires.
Turbulence in the film is generated by piercing the film with an array of cylindrical obstacles,
resulting in chaotically interacting wake vortices. These perturbations
are accompanied by film thickness fluctuations which behave similar as those of a passive scalar. 
The soap film is illuminated and light reflections are recorded at high speed.
The recorded interference pattern depends on the film thickness fluctuations 
and therefore behaves similar as a passive scalar. 

Motivated by this experiment, we consider a
particular Cartesian geometry, $\Omega = (-a,a) \times (-b,b)$, with $a,b > 0$, and
write $Q=\Omega \times (0,T)$ for $T > 0$. 
Our proofs generalize to many other two dimensional settings but the above choice allows for convenient notations.
We assume that the flow satisfies the slip or no-slip
conditions on the vertical boundaries, $x = \pm a$, 
and that the scalar field satisfies the no flux condition there, 
but the boundary conditions for both the velocity $u$ and the scalar field $\psi$ on the top
and bottom boundaries $y = \pm b$ are unknown. 

Determination of $u$ given $\psi$ is possible only if $\psi$ satisfies some non-degeneracy condition.
Indeed, a constant $\psi$ satisfies (\ref{eq_for_psi}) but gives no information on $u$.
To state our main result in a simplified form (see Theorem \ref{cor_NS_unique_cont} below for the precise formulation),
we assume that the spatial derivative of $\psi$ does not vanish identically on the left boundary $x=-a$ at any time.

The simplified formulation is as follows.
Let ${u}$  and $\tilde { u}$ be smooth velocity
fields satisfying the Navier-Stokes equations, together with, say, the no-slip boundary condition on $x=\pm a$,
and let $\psi$ and $\tilde \psi$ be smooth scalar fields satisfying (\ref{eq_for_psi})
with $u$ and $\tilde u$, respectively, together with the no flux boundary condition on $x = \pm a$.
Suppose, moreover, that for all $t \in (0,T)$ there is $y \in (-b,b)$ such that 
$\p_y \psi(-a,y,t) \ne 0$. 
Then for every space time domain $\omega$, such
that $\overline \omega \subset Q$ (here and below $\overline X$
denotes the closure of the set $X$), there exists $\alpha \in (0,1)$ and $C > 0$ such that the following stability estimate holds
\begin{equation}\label{eq:1st_bound}
\|{u} - \tilde {u}\|_{L^2(\omega)}\leq C \|\psi - \tilde \psi\|_{H^4(Q)}^\alpha,
\end{equation}
whenever $u$ and $\tilde u$ satisfy an a priori bound.
Observe that this estimate in particular shows that the scalar field $\psi$
uniquely determines the velocity field $u$. It also gives an upper bound
similar to those applied in \cite{BBFV20, BO18} to obtain sharp error
estimates for finite element data assimilation methods. Although the
estimate \eqref{eq:1st_bound} can not be directly applied in that
context, it is a first step towards an analysis of the error
propagation in computational velocity reconstruction based on SIV.

Our approach combines a stream function formulation for the two dimensional
Navier-Stokes' equations, with the classical pressure velocity
formulation. 
First, using a stream function $\Theta$ to represent the
velocity field $u$, the convection--diffusion equation
for the scalar field $\phi$ defines a transport equation for $\Theta$. Provided the
scalar field satisfies the non-degeneracy condition $\p_y \psi \ne 0$ on the left boundary, 
this transport
equation can be solved in a neighbourhood of
the left boundary. This way we show that $u$ can be reconstructed locally given the scalar field $\psi$. 
Then to extend this local
reconstruction to an arbitrary subdomain in the interior of the space time domain, we apply a unique continuation result for the transient
linearized Navier-Stokes' equations \cite{LW20}.
Observe that the local
reconstruction step is possible whenever the velocity $u$ is known
along a curve segment transverse to the level curves of $\psi$. In our
configuration this holds on the lateral boundaries thanks to the
chosen boundary conditions. A variant of this local reconstruction
approach, using classical techniques, has recently been applied to SIV in \cite{SRMH19}. 


The outline of the paper is as follows. First in section
\ref{sec:SIV_comp} we recall a recent technique for computational
velocity reconstruction,
and study its stability computationally in Section~\ref{sec:turbulence}. Then in Section
\ref{sec:analysis} we prove the conditional stability estimate. The
paper ends with concluding remarks in Section \ref{sec:conclusion}.

\section{An example of a computational method for SIV}\label{sec:SIV_comp}

In this section we describe a variational method
for the reconstruction of the space and time dependent velocity field 
${u}$, pressure field $p$ and scalar field $\phi$
from a measured space and time dependent scalar field $\psi$,
also referred to as the true scalar field.
We assume perfect knowledge of $\psi$ over a 
space time volume of $\Omega\times(0,T)$, where $\Omega$ and $(0,T)$ represent the corresponding spatial and temporal dimensions.
The reconstruction method divides $(0,T)$ into segments.
The the $i$-th segment starts and ends at $t_0=(i-1)\tau$ and $t_1=t_0+\tau$, respectively,
where $\tau$ is referred to as the segment time. 
The reconstruction scheme solves a sequence of optimisation problems 
for the unknown state variable ${w}=({u},p,\phi)$ 
at the start of each segment, i.e. at $t=t_0$.
We use a subscript on a field variable to indicate a time instance, 
e.g. ${w}_0={w}(t_0)$. 
Finding ${w}_0$ in each segment involves an iterative scheme, and
the initial guess for the iteration is taken from the reconstructed field 
${w}_1$ at $t=t_1$ obtained in the preceding segment.

\subsection{The cost function and its minimisation}
\label{scheme}
Defining the state variable as:
${w}=({u},p,\phi)$,
the velocity reconstruction method finds the initial conditions for the state variable ${w}_0$ in each time segment, 
by minimising the deviation between the reconstructed scalar field $\phi$ 
and the measured scalar field $\psi$ integrated over $\Omega\times(0,T)$.
The corresponding cost functional for the method reads:
\begin{equation}
\mathscr{J}=\frac{1}{2}\int_{t_0}^{t_1}\| \phi-\psi \|^2dt,
\label{eq1}
\end{equation}
where the norm 
$\|\cdot\|$ is based on the $L^2(\Omega)$ inner product, denoted by $\langle \cdot,\cdot\rangle$.

Equation (\ref{eq1}) is minimised under the constraint that ${w}_1$ is related to ${w}_0$
via the conservation equations of fluid momentum, fluid mass and scalar field:
\begin{equation}
{R}({w})=
\left(
\begin{array}{ccc}
\partial_t {u}+{u}\cdot{\nabla}{u}+{\nabla}p-\nu\Delta {u} \\
{\nabla\cdot u} \\
\partial_t \phi+{u}\cdot{\nabla}\phi-\lambda\Delta \phi \\
\end{array}
\right)={0} .
\label{eq7}
\end{equation}
Here $\nu$ is the fluid kinematic viscosity and $\lambda$ is scalar diffusivity. 
Adding constraint (\ref{eq7}) to equation (\ref{eq1}) using the Lagrange multiplier 
$\hat{{w}}=(\hat{{u}},\hat{p},\hat{\phi})$ 
results in the following constrained cost functional, which is referred to as the Lagrangian $\mathscr{L}$:
\begin{equation}
\mathscr{L}=
\int_{t_0}^{t_1} 
\left(\frac12 \| \phi-\psi \|^2+
\langle\hat{{w}},{R}({w})\rangle
\right)dt.
\label{eq2}
\end{equation}

Minimising $\mathscr{L}$ of (\ref{eq2}) w.r.t. ${w}_0$ involves computing the gradient of $\mathscr{L}$ w.r.r ${w}_0$, i.e. $\delta \mathscr{L}/\delta{w}_0$,
following \cite{gillissen2018space} we obtain:
\begin{equation}
\frac{\delta \mathscr{L}}{\delta {w}_0}=
\left(
\begin{array}{c}
\frac{\delta \mathscr{L}}{\delta {u}_0}\\
\frac{\delta \mathscr{L}}{\delta p_0}\\
\frac{\delta \mathscr{L}}{\delta \phi_0}
\end{array}
\right)
=
\left(
\begin{array}{c}
-\hat{{u}}_0\\
0\\
- \hat{\phi}_0
\end{array}
\right).
\label{eq11}
\end{equation}
This expression for 
$\delta \mathscr{L}/\delta {w}_0$ contains the Lagrange multiplier 
$\hat{{w}}=(\hat{{u}},\hat{p},\hat{\phi})$,
whose evolution equation and initial and final conditions read:
\begin{subequations}
\begin{equation}
\left(
\begin{array}{ccc}
-\partial_t
  \hat{{u}}-{u}\cdot\left({\nabla}\hat{{u}}+{\nabla}\hat{{u}}^T\right)-{\nabla}\hat{p}-\nu
  \Delta
  \hat{{u}}-\phi{\nabla}\hat{\phi} \\
-{\nabla\cdot \hat{u}} \\
-\partial_t \hat{\phi}-{u}\cdot{\nabla}\hat{\phi}-\lambda \Delta \hat{\phi}+\phi-\psi \\
\end{array}
\right)
={0},
\end{equation}
\begin{equation}
\hat{{u}}_1={0},
\hspace{1 cm}
\hat{\phi}_1=0,
\end{equation}
\begin{equation}
\hat{{u}}_0={0},
\hspace{1 cm}
\hat{\phi}_0=0.
\end{equation}
\label{eq32}
\end{subequations}
Equation (\ref{eq32}a) governs the evolution of the Lagrange multiplier  
$\hat{{w}}=(\hat{{u}},\hat{p},\hat{\phi})$, showing that  
$\hat{{u}}$ is 
incompressible 
and is forced along gradients of $\hat{\phi}$, 
and that 
both $\hat{{u}}$ and $\hat{\phi}$ are 
advected by ${u}$ and are subjected to diffusion.
The diffusion coefficients $-\nu$ and $-\lambda$ of these transport equations 
are negative, and therefore these equations are integrated backward 
in time from $t=t_1$ to $t=t_0$. 
The `starting' 
conditions $\hat{{w}}_1$ at $t=t_1$ are given by equation (\ref{eq32}b), while the 
`final' conditions $\hat{{w}}_0$ at $t=t_0$ define 
the optimisation update direction of ${w}_0$ via (\ref{eq11}).
This direction approaches zero, 
when ${w}_0$ reaches an extremum of $\mathscr{L}$, 
which corresponds to the condition of (\ref{eq32}c).


To find ${w}_0$ we use the Polak-Rebiere variant of the conjugate gradient method 
\cite{polak1971computational},
which updates 
$ {w}_0$ along a search direction ${h}$ related to ${\delta \mathscr{L}}/{\delta {w}_0}$. 
The initial guess for $ {w}_0$ is $ {w}_1$ from the previous time segment, and the 
step length along ${h}$ is varied using 
Brent's line minimisation algorithm \cite{brent2013algorithms},
until the minimum of the functional $\mathscr{J}$, from (\ref{eq1}),
in this direction is found. 
The conjugate gradient algorithm is continued until the relative change in $\mathscr{J}$ 
between two consecutive iterations drops below 0.01. 
A reconstruction typically require 
$\sim 10^2$ conjugate gradient steps and $\sim 10$
Brent minimisation steps per conjugate gradient step.
Therefore the computational effort of both methods 
is equivalent to that of $\sim 10^3$ computational fluid dynamics simulations.

It was shown in \cite{gillissen2018space} that the reconstruction method method produces unstable results when 
the segment time $\tau$ is larger than the flow correlation time $\mathscr{T}$, 
which for the system described above corresponds to 
$\tau\gtrsim 0.1$.
The instability is related to the 
ill posed-ness of the initial value problem for chaotic systems, 
which corresponds to the cost functional developing 
multiple minima, when $\tau$ exceeds $\mathscr{T}$, see for instance \cite{pires1996extending}.
In order to stabilise the method for these cases regularisation terms
may be added 
to the functional of (\ref{eq1}).
%
%
In this work we restrict ourselves to $\tau\lesssim 0.1$, which does not require the use of these regularisation terms. 

\section{Numerical investigation of the stability of SIV}
\label{sec:turbulence}
\subsection{Setup}
\label{setup}
We apply the computational velocity reconstruction 
to a two dimensional, incompressible, decaying turbulent flow field 
in a space time domain $\Omega\times(0,T)$.
The spatial domain $\Omega=(-\pi,\pi)^2$ is a bi-periodic square with
side length $L=2\pi$ and the temporal domain 
$(0,T)$ has a size of $T=8$, which is referred to as the reconstruction time.
The objective is to reconstruct the velocity field ${u}$ 
from the measured space and time dependent scalar field $\psi$.
To distinguish between the reconstructed velocity field and the reference velocity field, giving the advection of $\psi$,
we denote the former by $u$ and the latter by $v$.
It is re-emphasised that we have access to perfect information of $\psi$ on $\Omega\times(0,T)$.

Both $\psi$ and ${v}$ start from random initial conditions and 
the initial conditions are normalised, such that $\mathscr{U}=\|{v}\|=1$ and $\|\psi\|=1$ at $t=0$. 
The diffusivity is $\lambda=2\times 10^{-3}$ and the viscosity is $\nu=1\times 10^{-3}$, 
which corresponds to a Reynolds number of $\text{Re}=\mathscr{U}L/\nu=6.3\times 10^3$
based on the initial velocity scale $\mathscr{U}$ 
and a Schmidt number of $\text{Sc}=\nu/\lambda=1/2$.
We furthermore use a segment time of $\tau=8\times 10^{-2}$.

The true velocity field ${v}$ and the true (measured) scalar field $\psi$ 
are generated on a $256^2$ grid by numerically integrating (\ref{eq7}).
Subsequently the scalar measurement is interpolated to a $128^2$ grid and 
the reconstruction fields ${u}$ and $\phi$ are obtained on this $128^2$ grid 
by iteratively integrating equations (\ref{eq7}, \ref{eq11}, \ref{eq32}). 
On both the $256^2$ and $128^2$ grids, 
equations (\ref{eq7}, \ref{eq32}a) are advanced in time 
using a computational time step of $\Delta t=10^{-3}$.
Spatial derivatives in these equations are computed using the Fourier
basis functions. Time integration is performed using the second-order
explicit Adams--Bashforth scheme for the advection terms and the
second-order implicit Crank--Nicolson scheme for the diffusion terms.
For further details on the numerical methods see
\cite{gillissen2019two,gillissen2019data,gillissen2018space}.

\subsection{Results}
\begin{figure}
\centerline{
\psfig{file=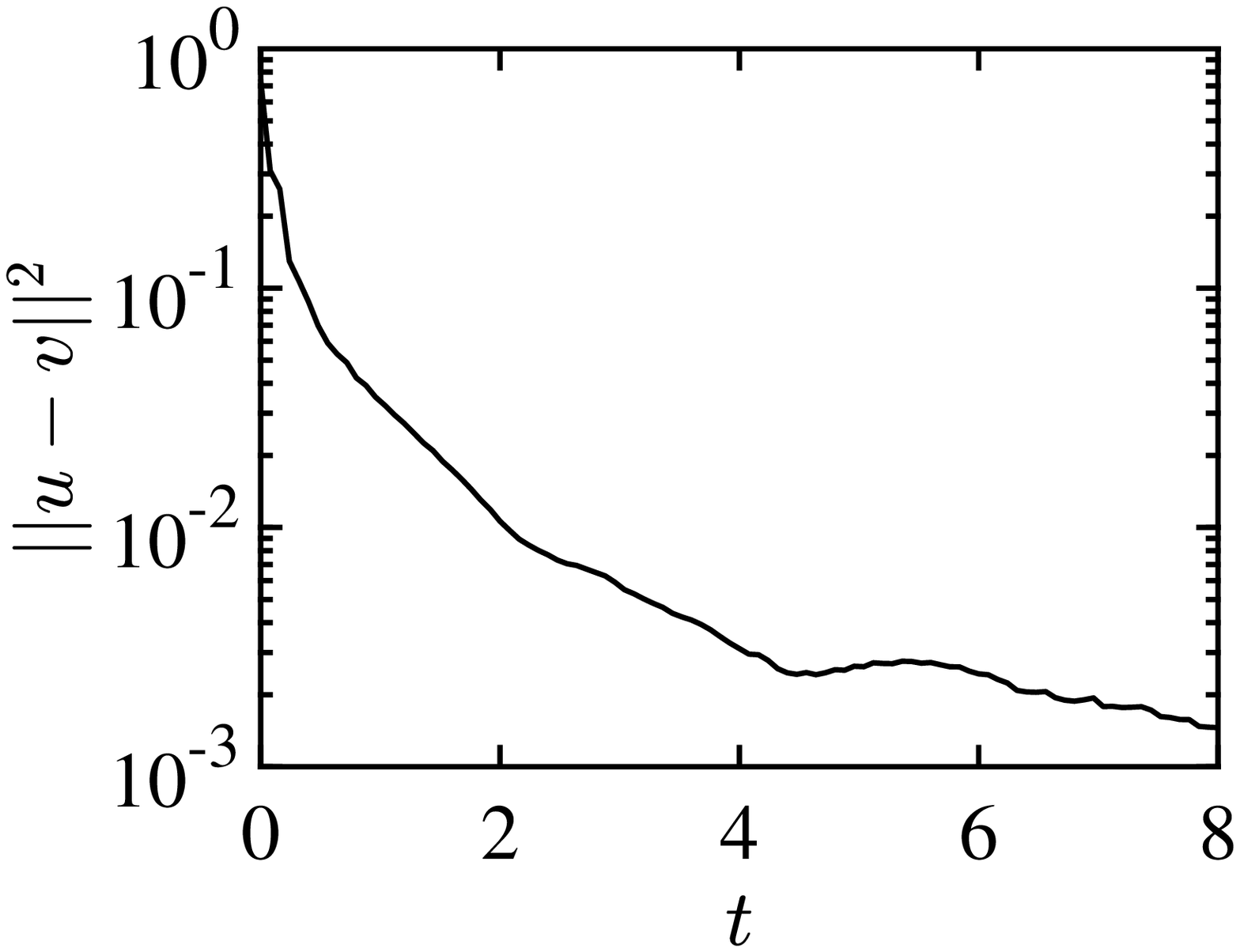,width=0.5\linewidth}
\psfig{file=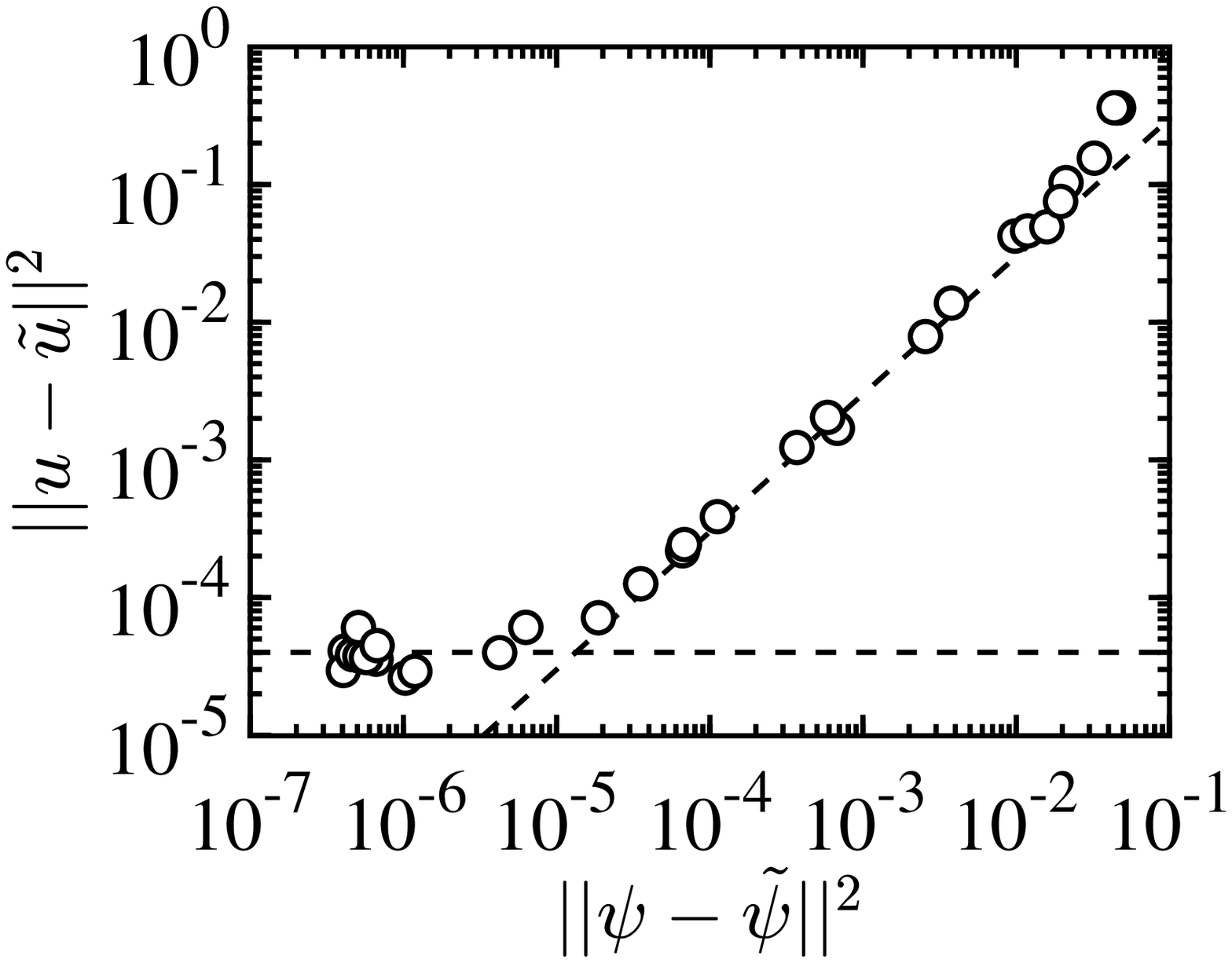,width=0.5\linewidth}
}
\caption{
(Left) Reconstruction error 
versus time. 
(Right) Variation in reconstructed velocity versus variation in measured scalar.
}
\label{fig1}
\end{figure}

\begin{figure}
\centerline{
\psfig{file=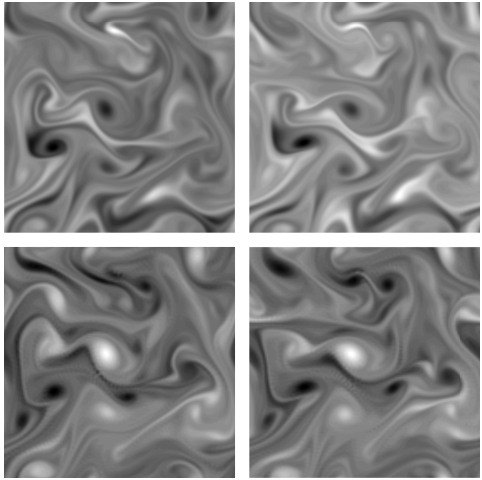,width=0.5\linewidth}
}
\caption{
(Top) Two `measured' scalar fields, $\psi$ and $\tilde{\psi}$, at $t=6$, 
that are advected by two `true' velocity fields ${v}$ and $\tilde{{v}}$. 
At $t=0$ it was assumed that $\tilde{\psi}=\psi$ and that $\tilde{{v}}={v}+\delta {v}$
where $\| \delta{v}\|/\|{v}\|=10^{-1}$.
(Bottom) The curl of the corresponding reconstructed velocity fields, 
${u}$ and $\tilde{{u}}$, at $t=6$.
}
\label{fig2}
\end{figure}
Figure \ref{fig1}(left) shows 
the reconstruction error 
$\epsilon={\| {u} - {v}\vert \vert^2}$ 
 as a function of time,
where it is recalled, 
that ${u}$ is the reconstructed velocity and ${v}$ is the reference velocity field. 
It is seen, that with time, $\epsilon$ first decreases 
and after $t> 4$, the error levels off.

Recalling the time segmentation described in the first paragraph if Section \ref{sec:SIV_comp}, 
the time dependent behaviour in Figure \ref{fig1}(left) 
indicates that the reconstruction depends 
on the quality of the initial guess for the initial condition at the start of each segment. 
In the first segment the initial guess is zero, while in each consecutive segment 
the initial guess becomes closer to the reference solution, explaining the 
observed decrease in $\epsilon$ with time. 
Also recall that 
the stability of the unique continuation problem for parabolic
equations degenerates towards
$t=0$, so a similar effect is expected also for globally coupled solutions. 

We now study the convergence of the reconstructed ${u}$ as a function of  the true (measured) $\psi$.
To this end we generate multiple simulations of true
scalar fields $\psi$ that are advected by different `true' velocity fields ${v}$.
For these simulations the conditions at $t=0$ for $\psi$ are identical but the conditions for ${v}$ differ.
Differences in a quantity $q$ between two simulations are denoted by 
$q-\tilde{q}$.
The relative difference in the initial conditions 
is varied between $10^{-5}$ and $10^{-1}$.
To study the convergence of ${u}$ as a function of $\psi$, we plot 
in Figure \ref{fig1}(right)  $\| {u}-\tilde{{u}}\|^2$ as a function 
of  $\| \psi-\tilde{\psi}\|^2$. 
It is noted that we average this norm over a time interval of $6<t<8$. 
During this time interval the computational method has converged roughly to a steady state, 
as show in Figure \ref{fig1}(left). 

Figure \ref{fig1}(right) shows that there are two regimes in the dependence of the variation of the 
reconstructed velocity $\| {u}-\tilde{{u}}\|^2$ on the variation of 
the true scalar $\| \psi-\tilde{\psi}\|^2$:
\begin{equation}
\| {u}-\tilde{{u}}\|^2 \sim
\left\{
\begin{array}{ccc}
\mathrm{constant} & \mathrm{for} & \| \psi-\tilde{\psi}\|^2\lesssim 10^{-5} \\
\| \psi-\tilde{\psi}\|^2 & \mathrm{for} & \| \psi-\tilde{\psi}\|^2\gtrsim 10^{-5} \\
\end{array}
\right. .
\label{eq8}
\end{equation}
These regimes are indicated by the dashed lines in Figure \ref{fig1}(right) which have a slope of zero and one, respectively. 
In these regimes, the variation 
of the true scalar $\| \psi-\tilde{\psi}\|^2$
(and of the true velocity $\| {v}-\tilde{{v}}\|^2$) 
is smaller and larger, respectively, than the error of the reconstructed velocity $\| {u}-{{v}}\|^2$.
Consequently, the variation of the reconstructed velocity $\| {u}-\tilde{{u}}\|^2$ in these two regimes is set 
by $\| {u}-{{v}}\|^2$ (constant) and by $\| \psi-\tilde{\psi}\|^2$,
respectively. The level of stagnation depends on the time and space
resolution of the Navier-Stokes' solver, which limits the accuracy of the reconstructed velocity.
The second regime of Eq. (\ref{eq8}) corresponds
to Lipschitz stability of the velocities. 
In view of the classical stability result \cite{Emanuilov1995} for data assimilation problems subject to the heat equation, Lipschitz stability
is feasible thanks to the fact that only the initial data of $u$ are
unknown in this computational example. We also show that the local reconstruction step is Lipschitz stable, see Remark \ref{rem_local} below. 
We leave a precise numerical analysis of the computational error as a topic of a future work,
see \cite{BO18} for such an analysis of data assimilation problems subject to the heat equation.

For illustration purposes,
we show in Figure \ref{fig2}(top) 
two true scalar fields, $\psi$ and $\tilde{\psi}$, at $t=6$, 
that are advected by two `true' velocity fields ${v}$ and $\tilde{{v}}$. 
At $t=0$ it was assumed that $\tilde{\psi}=\psi$ and that $\tilde{{v}}={v}+\Delta {v}$
where $\| \Delta{v}\|/\|{v}\|=10^{-1}$.
Figure \ref{fig2}(bottom) shows the curl of the corresponding reconstructed velocity fields, 
${u}$ and $\tilde{{u}}$, at $t=6$.

\section{Stability analysis}\label{sec:analysis}

In this section we will derive a stability estimate that gives a
mathematical interpretation of the stability illustrated by the right plot of
Figure \ref{fig1}. Compared to the numerical example we consider a more
challenging setting where not only the initial data but also the
boundary data on parts of the boundary are unknown. 

To fix the ideas we consider a simplified geometric configuration similar to that of a
two-dimensional soap film experiment, described above. Let $\Omega = (-a,a) \times
(-b,b)$ 
and write $Q = \Omega \times (0,T)$. The vertical boundaries of
the domain are defined by
   \begin{align*}
\Sigma = \Sigma_- \cup \Sigma_+, 
\quad \Sigma_\pm = \{ (x,y) : \text{$x = \pm a$ and $y \in (-b,b)$}\}.
    \end{align*}

Let $u$ be a solution to the Navier--Stokes equations in $Q$:
\begin{equation}\label{eq_NS}
\left\{
\begin{array}{r}
\p_t u - \nu \Delta u + (u \cdot \nabla) u + \nabla p = 0,
\\
\nabla \cdot u = 0.
    \end{array}
\right.
\end{equation}
On $\Sigma$ we assume that $u$ satisfies either homogeneous Dirichlet
conditions, $u=0$, or slip conditions, i.e. $u \cdot n = 0$, where $n$
is the outward pointing normal on $\Sigma$. On the top and bottom boundaries the
boundary data are unknown. The viscosity coeffcient $\nu > 0$ is known.

We suppose that a passive scalar $\psi$ satisfies
    \begin{align}\label{eq_phi}
\p_t \psi + u \cdot \nabla \psi - \lambda \Delta \psi =0
    \end{align}
in $Q$, together with the no flux boundary condition 
    \begin{align}\label{eq_phi_bd}
u_1 \psi - \lambda \p_x \psi = 0 \mbox{ on } \Sigma.
    \end{align}
Observe that since $u_1=0$ on $\Sigma$ this reduces to a Neumann
condition in practice. The diffusivity coefficient $\lambda > 0$ is a known constant.

Recall that the inverse problem to find $u$ given $\psi$ is clearly unsolvable if $\psi$ is a constant function, so we
make the standing assumption that this is not the case. More precisely
we will assume that $\psi$ is non-constant in space on $\Sigma$.

It is well-known that the Navier-Stokes equations admit smooth solutions in the two dimensional case, see e.g. \cite[Remark 3.7, p. 303]{Temam1977}.
For simplicity, we assume that both $u$ and $\psi$ are smooth in
$\overline \Omega \times (0,T)$.

Note that smoothness of $\psi$ in the interior of $Q$ actually follows from the interior Schauder estimates for (\ref{eq_phi}) and smoothness of $u$, see e.g. \cite[Theorem 8.12.1, p. 131]{Krylov1996}.
However, as we do not assume any boundary conditions on the horizontal boundaries $y = \pm b$,
we do not enter into discussion of smoothness properties of $\psi$.

\subsection{Stream function}
As $\Omega$ is a two-dimensional simply connected domain, the vanishing of divergence $\nabla \cdot
u = 0$ implies that there is $\Theta$ such that 
\begin{equation}\label{eq:vel_theta}
u = (\p_y \Theta, -\p_x \Theta).
\end{equation}
Observe that (\ref{eq_phi}) implies 
$$
\p_y \Theta \p_x \psi - \p_x \Theta \p_y \psi = \zeta
$$
where $\zeta = \zeta_\psi = -\p_t \psi + \lambda \Delta \psi$.
Defining the (time-dependent) vector field 
    \begin{align}\label{def_X}
X = X_\psi = -\p_y \psi\, \p_x + \p_x \psi\, \p_y
    \end{align} 
on $\Omega$, we can write equivalently
    \begin{align}\label{eq_transp}
X \Theta = \zeta,
    \end{align}
viewing the vector field X as the differential operator
defined by (\ref{def_X}).

As $\p_y \Theta = u_1 = 0$ on $\Sigma$, due to the boundary condition on $u$, and as the stream functions $\Theta$ and $\Theta + c$, with $c \in \R$ a constant,
give the same $u$, we may assume that 
    \begin{align}\label{eq_Theta_bd}
\Theta|_{\Sigma_-} = 0.
    \end{align}
We can view (\ref{eq_transp})--(\ref{eq_Theta_bd}) as a transport equation for $\Theta$. 
Observe that the vector field $X$ and the right-hand side $\zeta$ are known as $\psi$ is known. 

Suppose that 
$\p_y \psi(p_0) \ne 0$ for a point
$p_0 \in \Sigma_- \times (0,T)$.
Then near $p_0$, we can rewrite (\ref{eq_transp}) as
    \begin{align}\label{eq_transp2}
\p_x \Theta + \beta \p_y \Theta = f
    \end{align}
where $\beta = -(\p_y \psi)^{-1} (\p_x \psi)$ and $f = (\p_y \psi)^{-1}\zeta$.
Together with the boundary condition (\ref{eq_Theta_bd}), this transport equation can be solved near $p_0$.
In particular, recalling \eqref{eq:vel_theta}, we see that $\psi$ determines $u$ near $p_0$.
We will next study the continuity of the map $\psi \mapsto u$ near $p_0$. Observe that this map is non-linear, both $\beta$ and $f$ depend on $\psi$.

\subsection{On stability of linear transport equations}

In this section we study linear transport equations in an abstract
setting and the meaning of the variables here are different from those
above. 

Consider the equation
    \begin{align}\label{transport}
\p_t u + \beta \cdot \nabla u = f
    \end{align}
with the initial condition $u|_{t=0} = 0$.
We are interested in the continuity properties of the map $(f,\beta) \mapsto u$.
However, let us first recall the standard continuity result for the
map $f \mapsto u$ with $\beta$ fixed. The exposition
below was inspired by \cite[Theorem 7, p. 131]{Evans10}
where more complicated Hamilton--Jacobi equations are considered. Contrary to this reference, we need to keep track of the dependence on $\beta$ of the constants in the estimates. 

Let $T > 0$, suppose that $\beta \in L^\infty(0,T; W^{1,\infty}(\R^n; \R^n))$
and choose 
    \begin{align*}
R \ge \norm{\beta}_{L^\infty((0,T) \times \R^n; \R^n)}.
    \end{align*}
Furthermore, define $r(t) = R(T-t)$, let $B(r) \subset \R^n$ be the
open ball of radius $r$, with a fixed centre and outward pointing
normal $n_B$, and consider the energy
    \begin{align*}
E(t) = \int_{B(r(t))} u^2(t,x)\, dx.
    \end{align*}
Then for a solution $u$ of (\ref{transport}),
    \begin{align}\label{energy}
\p_t E \le C E + \int_{B(r)} f^2\, dx,
    \end{align}
where 
    \begin{align*}
C = 1 + \norm{\beta}_{L^\infty(0,T; W^{1,\infty}(\R^n; \R^n))}.
    \end{align*}
Indeed, using 
    \begin{align*}
\p_t u^2 = 2 u \p_t u 
= -2u\beta\cdot \nabla u + 2uf 
= 2uf -\beta\cdot \nabla u^2,
    \end{align*}
we get
    \begin{align*}
\p_t E 
&= 
\int_{B(r)} 2 u \p_t u\, dx + r' \int_{\p B(r)} u^2\, dx
= 
\int_{B(r)} 2uf -\beta\cdot \nabla u^2\, dx + r' \int_{\p B(r)} u^2\, dx
\\&= 
\int_{B(r)} 2 u f + u^2 \nabla \cdot \beta\, dx 
- \int_{\p B(r)} u^2 n_B \cdot \beta\, dx
+ r' \int_{\p B(r)} u^2\, dx
\\&\le
\int_{B(r)} 2 u f + u^2 \nabla \cdot \beta\, dx,
    \end{align*}
where we used $|n_B \cdot \beta| \le R = -r'$.

Now (\ref{energy}) follows from the Cauchy-Schwarz inequality.
We write
    \begin{align}\label{def_cone}
\mathcal C = \{t \in (0,T) : x \in B(r(t))\}.
    \end{align}
Gr\"onwall's inequality implies the following lemma.

\begin{lemma}\label{lem_L2}
Let $\beta \in L^\infty(0,T; W^{1,\infty}(\R^n; \R^n))$. Then there is $C > 0$
such that 
    \begin{align*}
\norm{u(t)}_{L^2(B(r(t)))} \le C \norm{f}_{L^2(\mathcal C)}, \quad t \in (0,T),
    \end{align*}
for all $u \in C^1(\overline{\mathcal C})$ satisfying (\ref{transport})
and $u|_{t=0} = 0$.
The constant $C$ depends only on $T$ and the norm of $\beta$.
\end{lemma}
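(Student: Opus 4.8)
The plan is to read the lemma off directly from the differential inequality (\ref{energy}), which has already been established for the energy $E(t) = \int_{B(r(t))} u^2(t,x)\, dx$; the only remaining ingredient is Gr\"onwall's inequality. Since $u \in C^1(\overline{\mathcal C})$, the function $E$ is $C^1$ on $[0,T]$, the initial condition $u|_{t=0} = 0$ forces $E(0) = 0$, and (\ref{energy}) reads $E'(t) \le C E(t) + g(t)$ with $g(t) = \int_{B(r(t))} f^2\, dx$ and $C = 1 + \norm{\beta}_{L^\infty(0,T; W^{1,\infty}(\R^n;\R^n))}$.

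To integrate this I would use the integrating factor $e^{-Ct}$: from $(e^{-Ct} E)' = e^{-Ct}(E' - CE) \le e^{-Ct} g \le g$ together with $E(0) = 0$, integrating over $[0,t]$ gives $e^{-Ct} E(t) \le \int_0^t g(s)\, ds \le \int_0^T g(s)\, ds$, hence $E(t) \le e^{CT} \int_0^T g(s)\, ds$ for every $t \in (0,T)$. It then remains to recognise the right-hand side as the square of the $L^2$ norm of $f$ over the space--time cone: by Fubini, $\int_0^T g(s)\, ds = \int_0^T \int_{B(r(s))} f^2\, dx\, ds = \norm{f}_{L^2(\mathcal C)}^2$, where $\mathcal C$ is the set defined in (\ref{def_cone}). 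Taking square roots yields $\norm{u(t)}_{L^2(B(r(t)))} \le e^{CT/2} \norm{f}_{L^2(\mathcal C)}$, so the constant in the lemma may be taken to be $e^{CT/2}$, which depends only on $T$ and $\norm{\beta}_{L^\infty(0,T; W^{1,\infty})}$, as claimed.

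Since the genuinely analytic step --- the cancellation of the boundary flux term $-\int_{\p B(r)} u^2 n_B \cdot \beta$ against the contraction term $r' \int_{\p B(r)} u^2$, made possible by the choice $r' = -R \le -|n_B \cdot \beta|$ of a cone shrinking at a speed dominating $\beta$ --- is already contained in the derivation of (\ref{energy}), there is no serious obstacle left and the argument reduces to a routine application of Gr\"onwall. The only points deserving care are book-keeping ones: verifying that the initial datum really gives $E(0) = 0$ (this is where $u|_{t=0} = 0$ enters), checking that $t \mapsto g(t)$ is integrable so that Fubini applies (immediate from $u \in C^1(\overline{\mathcal C})$), and keeping the final constant in the form $e^{CT/2}$ so that its dependence on $T$ and on the $L^\infty(0,T; W^{1,\infty})$ norm of $\beta$ stays transparent --- this being exactly the dependence we will need to track when $\beta$ is later allowed to vary in the continuity analysis of the map $(f,\beta) \mapsto u$.
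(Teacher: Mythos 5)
Your proposal is correct and follows exactly the route the paper intends: the paper establishes the energy inequality (\ref{energy}) immediately before the lemma and then simply states that Gr\"onwall's inequality yields the result, which is precisely the integrating-factor argument you carry out. Your write-up merely makes explicit the routine steps (the vanishing of $E(0)$, the Fubini identification of $\int_0^T g$ with $\norm{f}_{L^2(\mathcal C)}^2$, and the form $e^{CT/2}$ of the constant) that the paper leaves to the reader.
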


We need also higher order energy estimates.
For notational simplicity, let us now consider only the case $n=1$. Let $k \in \N$. If $u$ satisfies (\ref{transport}), then $v = \p_x^k u$ satisfies
    \begin{align*}
\p_t v + \beta \p_x v = \tilde f, \quad \tilde f = \p_x^k f - [\p_x^k, \beta\p_x]u.
    \end{align*}
We write 
    \begin{align*}
\tilde E_k = \int_{B(r)} v^2\, dx,
\quad
E_K = \sum_{k=0}^K \int_{B(r)} (\p_x^k u)^2\, dx,
    \end{align*}
Applying (\ref{energy}) to $v$ gives
    \begin{align}\label{est_tildeE}
\p_t \tilde E_k \le C \tilde E_k + \int_{B(r)} \tilde f^2\, dx
\le C \tilde E_k + \int_{B(r)} (\p_x^k f)^2\, dx
+ C \tilde E_k,
    \end{align}
where the constant depends only on the $L^\infty(0,T; W^{k,\infty}(\R))$ norm of $\beta$.
The sum of the estimates (\ref{est_tildeE}) for $k=0,\dots,K$ gives  
    \begin{align}\label{energyK}
\p_t E_K \le C E_K + C\sum_{k=0}^K \int_{B(r)} (\p_x^k f)^2\, dx.
    \end{align}
Gr\"onwall's inequality gives the following lemma.

\begin{lemma}\label{lem_Hk}
Let $k \ge 1$ and $\beta \in L^\infty(0,T; W^{k,\infty}(\R))$. Then there is $C > 0$
such that 
    \begin{align*}
\norm{u(t)}_{H^k(B(r(t)))} 
\le 
C \sum_{j=0}^k \norm{\p_x^k f}_{L^2(\mathcal C)}, 
\quad t \in (0,T),
    \end{align*}
for all $u \in C^{k+1}(\overline{\mathcal C})$ satisfying (\ref{transport})
and $u|_{t=0} = 0$.
The constant $C$ depends only on $T$ and the norm of $b$.
\end{lemma}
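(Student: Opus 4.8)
The plan is to obtain the estimate by running Gr\"onwall's inequality on the aggregated energy $E_K$, exactly as Lemma \ref{lem_L2} was obtained for $E_0$, and then to identify $E_k$ with the squared $H^k$ norm. Concretely, I would take $K = k$ and note that
\begin{align*}
E_k(t) = \sum_{j=0}^k \int_{B(r(t))} (\p_x^j u)^2\, dx = \norm{u(t)}_{H^k(B(r(t)))}^2,
\end{align*}
so that a time-uniform bound for $E_k$ in terms of the forcing is precisely the claimed estimate. Thus the whole argument reduces to establishing the differential inequality (\ref{energyK}) and then integrating it.

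The only step that requires genuine care, and which I expect to be the main (if elementary) obstacle, is the passage to (\ref{energyK}) through the commutator term. Writing $v = \p_x^k u$, the function $v$ solves $\p_t v + \beta \p_x v = \p_x^k f - [\p_x^k, \beta \p_x] u$, and the essential point is that the top-order contribution $\beta \p_x^{k+1} u$ drops out of the commutator. Indeed, the Leibniz rule gives
\begin{align*}
[\p_x^k, \beta \p_x] u = \sum_{j=1}^k \binom{k}{j} (\p_x^j \beta)(\p_x^{k-j+1} u),
\end{align*}
which involves only derivatives of $u$ of order at most $k$ and derivatives of $\beta$ of order at most $k$. Consuming the hypothesis $\beta \in L^\infty(0,T; W^{k,\infty}(\R))$ together with the Cauchy--Schwarz inequality, I would bound $\int_{B(r)} ([\p_x^k,\beta\p_x]u)^2\, dx \le C E_k$ with $C$ depending only on the $W^{k,\infty}$ norm of $\beta$. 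Inserting this into the energy identity (\ref{energy}) applied to $v$ yields (\ref{est_tildeE}), and summing over $k = 0,\dots,K$ absorbs the lower-order energies into $C E_K$, producing (\ref{energyK}).

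It then remains to integrate (\ref{energyK}). Since $u|_{t=0} = 0$ as a function on the ball, every spatial derivative of $u$ also vanishes there, so $E_K(0) = 0$. Gr\"onwall's inequality therefore gives
\begin{align*}
E_K(t)
&\le e^{CT} \int_0^t C \sum_{k=0}^K \int_{B(r(s))} (\p_x^k f)^2\, dx\, ds \\
&\le C \sum_{k=0}^K \norm{\p_x^k f}_{L^2(\mathcal C)}^2,
\end{align*}
where the last inequality uses the definition (\ref{def_cone}) of $\mathcal C$ to recognise the space-time integral over the shrinking ball $B(r(s))$ as an $L^2(\mathcal C)$ norm. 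Taking $K = k$, recalling $E_k(t) = \norm{u(t)}_{H^k(B(r(t)))}^2$, and using $\sqrt{\sum_j a_j^2} \le \sum_j a_j$ for nonnegative $a_j$ to pass from the sum of squares to the sum of norms, yields the asserted bound, with a constant depending only on $T$ and the norm of $\beta$.
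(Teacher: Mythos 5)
Your proposal is correct and follows essentially the same route as the paper: apply the $L^2$ energy estimate to $v=\p_x^k u$, control the commutator $[\p_x^k,\beta\p_x]u$ (which loses the top-order term $\beta\p_x^{k+1}u$) via the $W^{k,\infty}$ bound on $\beta$, sum to obtain (\ref{energyK}), and conclude with Gr\"onwall. The only difference is that you write out the Leibniz expansion of the commutator explicitly, which the paper leaves implicit.
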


We can apply a similar argument also to the time derivative. We will need only the first derivative in time $v = \p_t u$.
Then
    \begin{align*}
\p_t v + \beta \p_x v = \tilde f, \quad \tilde f = \p_t f - (\p_t\beta)\p_x u.
    \end{align*}
Writing $\tilde E = \int_{B(r)} v^2\, dx$ and applying (\ref{energy}) to $v$ gives
    \begin{align*}
\p_t \tilde E \le C \tilde E + C\int_{B(r)} \tilde f^2\, dx
\le C \tilde E + \int_{B(r)} (\p_t f)^2\, dx
+ C E_1,
    \end{align*}
where the constant depends only on $W^{1,\infty}((0,T) \times \R)$ norm of $\beta$.
The sum of this and (\ref{energyK}) with $K=1$, together with 
Gr\"onwall's inequality, gives the following lemma.

\begin{lemma}\label{lem_ptL2}
Let $\beta \in W^{1,\infty}((0,T) \times \R)$. Then there is $C > 0$
such that 
    \begin{align*}
\norm{\p_t u(t)}_{L^2(B(r(t)))} + \norm{u(t)}_{H^1(B(r(t)))} 
\le 
C \norm{f}_{H^1(\mathcal C)}, 
\quad t \in (0,T),
    \end{align*}
for all $u \in C^{2}(\overline{\mathcal C})$ satisfying (\ref{transport})
and $u|_{t=0} = 0$.
The constant $C$ depends only on $T$ and the norm of $\beta$.
\end{lemma}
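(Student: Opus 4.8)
The plan is to reduce the estimate for $\p_t u$ to the basic energy inequality \eqref{energy} applied to the time-differentiated equation, exactly as the spatial case was reduced to \eqref{energyK}, and then to combine both bounds through a single Gr\"onwall argument. First I would set $v = \p_t u$ and differentiate \eqref{transport} in time to obtain $\p_t v + \beta \p_x v = \tilde f$ with $\tilde f = \p_t f - (\p_t \beta)\p_x u$. Applying \eqref{energy} to $v$ and using $\tilde f^2 \le 2(\p_t f)^2 + 2(\p_t\beta)^2(\p_x u)^2$ together with $|\p_t \beta| \le \norm{\beta}_{W^{1,\infty}((0,T)\times\R)}$, the last term integrates to at most $C E_1$, which gives
\begin{align*}
\p_t \tilde E \le C \tilde E + \int_{B(r)} (\p_t f)^2\, dx + C E_1 .
\end{align*}
Here the constant requires only $\p_t\beta \in L^\infty$, i.e. the hypothesis $\beta \in W^{1,\infty}((0,T)\times\R)$.

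Next I would add this inequality to \eqref{energyK} with $K=1$ and set $F = \tilde E + E_1 = \int_{B(r)} \big((\p_t u)^2 + u^2 + (\p_x u)^2\big)\,dx$, which yields
\begin{align*}
\p_t F \le C F + C \int_{B(r)} \big(f^2 + (\p_x f)^2 + (\p_t f)^2\big)\, dx .
\end{align*}
Integrating Gr\"onwall's inequality from $t=0$ then gives $F(t) \le e^{Ct}\big(F(0) + C\int_0^t \int_{B(r(s))} (f^2 + (\p_x f)^2 + (\p_t f)^2)\, dx\, ds\big)$. Since the domain of the double integral is contained in $\mathcal C$ and the integrand is dominated by the $H^1(\mathcal C)$ density, that term is bounded by $C\norm{f}_{H^1(\mathcal C)}^2$, so it remains only to control the initial energy $F(0)$.

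This last point is the one genuinely new ingredient, and the step I expect to be the main obstacle. In contrast with Lemmas \ref{lem_L2} and \ref{lem_Hk}, where $u|_{t=0}=0$ forces every spatial energy to vanish at $t=0$, here the time derivative does not vanish: evaluating \eqref{transport} at $t=0$ and using $\p_x u|_{t=0}=0$ gives $\p_t u|_{t=0} = f(0,\cdot)$, whence $E_1(0)=0$ but $F(0) = \tilde E(0) = \int_{B(r(0))} f(0,x)^2\,dx$ with $r(0)=RT$. To bound this I would invoke the trace theorem: in the present case $n=1$ the space--time cone $\mathcal C$ of \eqref{def_cone} is a bounded Lipschitz (indeed convex, triangular) domain whose boundary contains the flat face $\{0\}\times B(RT)$, so the trace of $f\in H^1(\mathcal C)$ satisfies $\int_{\{0\}\times B(RT)} f^2 \le C\norm{f}_{H^1(\mathcal C)}^2$, and hence $F(0) \le C\norm{f}_{H^1(\mathcal C)}^2$. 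Combining with the previous display gives $F(t)\le C\norm{f}_{H^1(\mathcal C)}^2$, and since $\norm{\p_t u(t)}_{L^2(B(r(t)))}^2 + \norm{u(t)}_{H^1(B(r(t)))}^2 = F(t)$, taking square roots and using $\sqrt{a+b}\le\sqrt a+\sqrt b$ yields the claim. The trace constant depends only on $\mathcal C$, hence on $R$ and $T$, so throughout $C$ depends only on $T$ and the norm of $\beta$, as required.
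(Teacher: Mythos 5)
Your proof follows the paper's argument exactly: differentiate \eqref{transport} in time, apply the energy inequality \eqref{energy} to $v=\p_t u$, absorb the commutator term $(\p_t\beta)\p_x u$ into $E_1$, add the result to \eqref{energyK} with $K=1$, and close with Gr\"onwall. The one place you go beyond the paper --- observing that $\tilde E(0)=\int_{B(RT)}f(0,x)^2\,dx$ need not vanish because $\p_t u|_{t=0}=f(0,\cdot)$, and controlling it by the trace of $f\in H^1(\mathcal C)$ on the face $\{0\}\times B(RT)$ of the cone \eqref{def_cone} --- is a correct and worthwhile addition, since the paper's one-line invocation of Gr\"onwall leaves this nonzero initial term implicit.
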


\begin{corollary}\label{cor_stab_transp}
Let $\beta_1, \beta_2 \in W^{1,\infty}((0,T) \times \R)$
and suppose that 
    \begin{align*}
\norm{\beta_j}_{L^\infty((0,T) \times \R)} \le R,
\quad
\norm{\beta_j}_{W^{1,\infty}((0,T) \times \R)} \le R_1.
    \end{align*}
Let $u_j \in C^2(\overline{\mathcal C})$ satisfy $u_j|_{t=0} = 0$ and
$\p_t u_j + \beta_j \p_x u_j = f_j$, $j=1,2$.
Suppose furthermore that 
$\norm{f_j}_{H^1(\mathcal C)} \le R_1$.
Then 
    \begin{align*}
\norm{u_1 - u_2}_{H^1(\mathcal C)}
\le C\left(
\norm{f_1 - f_2}_{H^1(\mathcal C)} + \norm{\beta_1 - \beta_2}_{W^{1,\infty}(\mathcal C)} 
\right),
    \end{align*}
where the constant depends only on $T$ and $R_1$.
\end{corollary}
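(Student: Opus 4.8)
The plan is to reduce the two-speed problem to a single transport equation for the difference $w = u_1 - u_2$ and then feed it into the energy estimates already established. Subtracting the two equations $\p_t u_j + \beta_j \p_x u_j = f_j$ and using $\p_t u_2 = f_2 - \beta_2 \p_x u_2$ to re-express everything in terms of the single drift $\beta_1$, I find that $w$ satisfies
\[
\p_t w + \beta_1 \p_x w = g, \qquad w|_{t=0} = 0, \qquad g := (f_1 - f_2) - (\beta_1 - \beta_2)\,\p_x u_2 .
\]
The point of this manipulation is that the mismatch between the drifts has been converted into a source term of the form (drift difference) $\times$ (a derivative of the \emph{second}, a priori bounded, solution $u_2$), so that $w$ now solves a genuine transport equation \eqref{transport} with the admissible drift $\beta_1$, whose $W^{1,\infty}$ norm is controlled by $R_1$.

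Next I would apply Lemma \ref{lem_ptL2} to $w$ with $\beta = \beta_1$. Since $\norm{\beta_1}_{W^{1,\infty}((0,T)\times\R)} \le R_1$, the lemma yields the pointwise-in-time bound $\norm{\p_t w(t)}_{L^2(B(r(t)))} + \norm{w(t)}_{H^1(B(r(t)))} \le C\norm{g}_{H^1(\mathcal C)}$ with $C = C(T,R_1)$. Squaring, integrating over $t \in (0,T)$, and recalling the definition \eqref{def_cone} of the cone $\mathcal C$, the left-hand side reassembles into $\norm{w}_{H^1(\mathcal C)}^2$, so that $\norm{w}_{H^1(\mathcal C)} \le C\norm{g}_{H^1(\mathcal C)}$.

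It then remains to estimate $\norm{g}_{H^1(\mathcal C)}$. By the triangle inequality together with the product estimate $\norm{\varphi \chi}_{H^1} \le C\norm{\varphi}_{W^{1,\infty}}\norm{\chi}_{H^1}$ (which follows at once from the Leibniz rule, distributing a single $\p_t$ or $\p_x$ onto either factor), I would bound
\[
\norm{g}_{H^1(\mathcal C)} \le \norm{f_1 - f_2}_{H^1(\mathcal C)} + C\norm{\beta_1 - \beta_2}_{W^{1,\infty}(\mathcal C)}\,\norm{\p_x u_2}_{H^1(\mathcal C)} .
\]
The first term is exactly one of the quantities in the conclusion, and the prefactor $\norm{\beta_1-\beta_2}_{W^{1,\infty}}$ of the second is the other; so the estimate closes provided $\norm{\p_x u_2}_{H^1(\mathcal C)}$ is bounded by a constant depending only on $T$ and $R_1$.

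Establishing this last a priori bound is the step I expect to be the main obstacle, since it requires controlling the second-order derivatives $\p_x^2 u_2$ and the mixed derivative $\p_t\p_x u_2$ in $L^2(\mathcal C)$, whereas Lemma \ref{lem_ptL2} applied directly to $u_2$ delivers only $u_2 \in H^1$; transport equations do not smooth, so along characteristics $\p_x w$ genuinely inherits $\p_x g$ and there is no way to avoid one derivative of $\p_x u_2$. The natural remedy is to differentiate the equation for $u_2$ once more and run the higher-order energy estimates: Lemma \ref{lem_Hk} (with $k=2$) controls $\p_x^2 u_2$, while repeating the $\p_t$-energy argument preceding Lemma \ref{lem_ptL2}, now applied to $\p_x u_2$, controls $\p_t\p_x u_2$. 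This bootstrap is where the regularity of the data is genuinely consumed: it needs second-order a priori bounds on $\beta_2$ and $f_2$ beyond those literally listed, but in the setting where the corollary is applied (smooth $u$ and $\psi$, cf. the derivation of \eqref{eq_transp2}) these are available, and one must keep careful track, exactly as in the derivation of \eqref{energyK}, that every constant produced depends only on $T$ and on the relevant a priori quantity rather than on the individual solutions. Once $\norm{\p_x u_2}_{H^1(\mathcal C)} \le C(T,R_1)$ is in hand, combining the three displayed inequalities gives the estimate asserted in Corollary \ref{cor_stab_transp}.
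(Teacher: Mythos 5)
Your route is the paper's route: the authors likewise set $w = u_1 - u_2$, derive $\p_t w + \beta_1 \p_x w = f_1 - f_2 + (\beta_2 - \beta_1)\p_x u_2$ with $w|_{t=0}=0$, and invoke Lemma \ref{lem_ptL2} together with the a priori bound $\norm{f_2}_{H^1(\mathcal C)} \le R_1$. The place where you slow down --- estimating the source term $(\beta_1-\beta_2)\p_x u_2$ in $H^1(\mathcal C)$ --- is precisely the place where the paper's own proof is loose: the paper bounds this contribution by $C\norm{\beta_1-\beta_2}_{W^{1,\infty}(\mathcal C)}\norm{u_2}_{H^1(\mathcal C)}$, but since Lemma \ref{lem_ptL2} consumes the $H^1$ norm of the \emph{entire} right-hand side, the Leibniz rule really produces $\norm{\p_x u_2}_{H^1(\mathcal C)}$, i.e. $\p_x^2 u_2$ and $\p_t\p_x u_2$ in $L^2(\mathcal C)$, exactly as you observe. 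You are also right that these cannot be extracted from the hypotheses as literally stated ($\beta_2 \in W^{1,\infty}$, $f_2 \in H^1$): transport equations propagate rather than gain regularity, and no integration by parts removes the extra derivative, since it would land on $\p_x w$ instead. So the obstacle you flag is genuine, it is not of your making, and your proposed repair (Lemma \ref{lem_Hk} with $k=2$ plus a mixed $\p_t\p_x$ energy identity, under strengthened bounds of the type $\norm{\beta_2}_{W^{2,\infty}} \le R_1$ and $\norm{f_2}_{H^2} \le R_1$) is the natural one.

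Two remarks on the repair. First, it modifies the statement of the corollary, and the strengthened hypotheses must then be supplied by Proposition \ref{prop_loc}; there $f = (\p_y \psi)^{-1}\zeta$ with $\psi \in H^4(Q)$ does yield $f \in H^2$ locally, but $\beta = -(\p_y \psi)^{-1}\p_x \psi$ lands only in $C^1$ via the embedding $H^4(Q) \hookrightarrow C^2(\overline Q)$ for the three-dimensional $Q$, so the $W^{2,\infty}$ control of $\beta$ is not free and would require either more regularity of $\psi$ or a variant of the higher-order lemma tolerating $\p_x^2\beta$ in a weaker norm. Second, the rest of your argument is sound: the reassembly of the pointwise-in-time estimates of Lemma \ref{lem_ptL2} into the space-time norm $\norm{w}_{H^1(\mathcal C)}$ and the product estimate $\norm{\varphi\chi}_{H^1} \le C \norm{\varphi}_{W^{1,\infty}}\norm{\chi}_{H^1}$ are both correct, so once the a priori bound on $\norm{\p_x u_2}_{H^1(\mathcal C)}$ is secured the proof closes as you describe.
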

\begin{proof}
The function $w = u_1 - u_2$ satisfies
    \begin{align*}
\p_t w + \beta_1 \p_x w = f_1 - f_2 + (\beta_2 - \beta_1) \p_x u_2,
    \end{align*}
and from Lemma \ref{lem_ptL2},
    \begin{align*}
\norm{w}_{H^1(\mathcal C)} 
&\le 
C \norm{f_1 - f_2}_{H^1(\mathcal C)} + C\norm{\beta_1 - \beta_2}_{W^{1,\infty}(\mathcal C)} \norm{u_2}_{H^1(\mathcal C)}.
    \end{align*}
Using $\norm{f_2}_{H^1(\mathcal C)} \le R_1$, the claim follows from Lemma \ref{lem_ptL2}.
\end{proof}

\subsection{Local recovery}
We will now apply Corollary \ref{cor_stab_transp} to (\ref{eq_transp2}).

\begin{proposition}\label{prop_loc}
Let $\psi \in H^4(Q)$
and suppose that 
$\p_y \psi(p_0) \ne 0$ for a point
$p_0$ in $\Sigma_- \times (0,T)$.
Let $U_0 \subset \overline \Omega \times (0,T)$
be a neighbourhood of $p_0$.
Then there are a neighbourhood $\mathcal B \subset H^4(Q)$
of $\psi$, a neighbourhood $U \subset U_0$ of $p_0$
and a constant $C > 0$ such that 
for all $\tilde \psi \in \mathcal B$ there holds
    \begin{align*}
\norm{\Theta - \tilde \Theta}_{H^1(U)}
\le C \norm{\psi - \tilde \psi}_{H^4(U)},
    \end{align*}
where $\Theta$ and $\tilde \Theta$ are the solutions of (\ref{eq_transp})--(\ref{eq_Theta_bd}) with $(X, \zeta)=(X_\psi, \zeta_\psi)$ and $(X, \zeta)=(X_{\tilde \psi}, \zeta_{\tilde \psi})$, respectively.
\end{proposition}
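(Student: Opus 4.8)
The plan is to read \eqref{eq_transp2} together with the boundary condition \eqref{eq_Theta_bd} as an instance of the abstract transport problem \eqref{transport}, with the horizontal variable $x$ playing the role of the evolution variable, the vertical variable $y$ playing the role of the one–dimensional spatial variable, and the physical time $t$ entering only as a parameter. After the shift $x\mapsto x+a$ the surface $\Sigma_-$ becomes the initial surface on which $\Theta$ vanishes by \eqref{eq_Theta_bd}, and $(\beta,f)$ are the coefficient and source of \eqref{eq_transp2}. Since $\partial_y\psi(p_0)\ne0$ and, by the three–dimensional Sobolev embedding $H^4(Q)\hookrightarrow C^2(\overline Q)$, $\partial_y\psi$ is continuous, I would fix $c>0$ and shrink $U_0$ to a set $U$ on which $|\partial_y\psi|\ge c$; shrinking the neighbourhood $\mathcal B\subset H^4(Q)$ of $\psi$ also gives $|\partial_y\tilde\psi|\ge c/2$ on $U$ for all $\tilde\psi\in\mathcal B$, so that $\beta,\tilde\beta$ and $f,\tilde f$ are well defined there. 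I would further take $U$ small enough to lie in a cone of determinacy $\mathcal C$ of the form \eqref{def_cone} with base on $\Sigma_-\times(0,T)$; extending $\beta$ and $f$ off $U$ in the $y$–variable by a cutoff does not affect $\Theta$ inside $U$ by finite speed of propagation, and renders the global-in-$y$ hypotheses of the lemmas of the previous subsection applicable for each fixed $t$.

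The first step is to control the \emph{horizontal} derivatives. Applying Corollary~\ref{cor_stab_transp} for each fixed $t$ to the pair $(\Theta,\tilde\Theta)$ gives
\[
\|\Theta(\cdot,t)-\tilde\Theta(\cdot,t)\|_{H^1_{x,y}(\mathcal C)}
\le C\bigl(\|f-\tilde f\|_{H^1_{x,y}(\mathcal C)}+\|\beta-\tilde\beta\|_{W^{1,\infty}(\mathcal C)}\bigr),
\]
with constant uniform in $t$ because the a priori bounds on $\beta,f$ follow from the $H^4$–bound (and $\mathcal B$) via $H^4\hookrightarrow C^2$. The crux here is the nonlinear dependence of $(\beta,f)$ on $\psi$: writing $\beta=-\partial_x\psi/\partial_y\psi$ and $f=\zeta/\partial_y\psi$ with $\zeta=-\partial_t\psi+\lambda\Delta\psi$, the lower bound $|\partial_y\psi|\ge c$ makes $\psi\mapsto(\beta,f)$ a smooth (rational) map of the derivatives of $\psi$, and elementary quotient/product estimates give
\[
\|\beta-\tilde\beta\|_{W^{1,\infty}(U)}\le C\|\psi-\tilde\psi\|_{C^2(U)}\le C\|\psi-\tilde\psi\|_{H^4(U)},\qquad
\|f-\tilde f\|_{L^2_tH^1_{x,y}(U)}\le C\|\psi-\tilde\psi\|_{H^3(U)} .
\]
The first of these, requiring $\nabla^2(\psi-\tilde\psi)\in L^\infty$, is exactly what forces the $H^4$ norm, since in three dimensions $C^2\hookleftarrow H^4$. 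Squaring the estimate above and integrating in $t$ then yields $\|\Theta-\tilde\Theta\|_{L^2_tH^1_{x,y}(U)}\le C\|\psi-\tilde\psi\|_{H^4(U)}$, which controls $\Theta-\tilde\Theta$ and its $x$– and $y$–derivatives in $L^2(U)$.

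It remains to control the genuine \emph{time} derivative $\partial_t(\Theta-\tilde\Theta)$, which the transport equation \eqref{eq_transp2} does not see directly; this is the main obstacle, since the abstract framework above is two–dimensional. Differentiating \eqref{eq_transp2} in $t$ shows that $v=\partial_t\Theta$ solves the same transport equation with coefficient $\beta$, vanishing initial data (as $\partial_t\Theta|_{\Sigma_-}=0$), and source $g=\partial_t f-(\partial_t\beta)\,\partial_y\Theta$, and similarly for $\tilde v=\partial_t\tilde\Theta$. Hence $w=v-\tilde v$ satisfies
\[
\partial_x w+\beta\,\partial_y w=(g-\tilde g)+(\tilde\beta-\beta)\,\partial_y\tilde v,\qquad w|_{\Sigma_-}=0,
\]
and Lemma~\ref{lem_L2}, applied for each $t$ and integrated in $t$, gives $\|w\|_{L^2(U)}\le C\bigl(\|g-\tilde g\|_{L^2(U)}+\|(\tilde\beta-\beta)\partial_y\tilde v\|_{L^2(U)}\bigr)$. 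Each term is estimated by splitting products: in $g-\tilde g$ the piece $\partial_t f-\partial_t\tilde f$ costs three derivatives of $\psi-\tilde\psi$, while $(\partial_t\beta-\partial_t\tilde\beta)\partial_y\Theta$ and $\partial_t\tilde\beta\,\partial_y(\Theta-\tilde\Theta)$ are handled using $\|\partial_t\beta-\partial_t\tilde\beta\|_{L^\infty}\le C\|\psi-\tilde\psi\|_{H^4}$, an a priori bound on $\partial_y\Theta$, and the first–step control of $\partial_y(\Theta-\tilde\Theta)$ in $L^2$; the last term uses $\|\tilde\beta-\beta\|_{L^\infty}$ together with an a priori bound on $\partial_y\partial_t\tilde\Theta$ obtained by applying Lemma~\ref{lem_Hk} to the equation for $\tilde v$. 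The a priori bounds on the derivatives of $\Theta,\tilde\Theta$ used throughout come from Lemmas~\ref{lem_L2}--\ref{lem_ptL2} applied to $\Theta,\tilde\Theta$ themselves, and are finite because $u$ (hence $\Theta$) is smooth and $\tilde\psi$ ranges over the bounded set $\mathcal B$.

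Combining the two steps gives $\|\Theta-\tilde\Theta\|_{H^1(U)}\le C\|\psi-\tilde\psi\|_{H^4(U)}$, as claimed; the estimate is linear in the difference, consistent with Lipschitz stability of the local step. I expect the principal difficulties to be twofold: the bookkeeping of the nonlinear map $\psi\mapsto(\beta,f)$ across the Sobolev and $W^{1,\infty}$ scales, where division by $\partial_y\psi$ and the requirement $\nabla^2\psi\in L^\infty$ dictate the choice of $H^4$, and the recovery of the physical time derivative, which lies outside the one–dimensional transport estimates and must be produced by differentiating the equation in $t$ and re-running the transport stability.
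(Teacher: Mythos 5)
Your proposal is correct and follows essentially the same route as the paper: Sobolev embedding $H^4(Q)\hookrightarrow C^2(\overline Q)$ to localise where $|\partial_y\psi|$ is bounded below, reduction of \eqref{eq_transp2} to the abstract transport problem with $x$ as the evolution variable and $y$ as the one-dimensional spatial variable on a cone of determinacy anchored at $\Sigma_-$, shrinking $\mathcal B$ to keep uniform bounds on $\tilde\beta$, invoking Corollary~\ref{cor_stab_transp}, and then estimating the nonlinear map $\psi\mapsto(\beta,f)$ by quotient rules, with $\|\beta-\tilde\beta\|_{W^{1,\infty}}\lesssim\|\psi-\tilde\psi\|_{H^4}$ and $\|f-\tilde f\|_{H^1}\lesssim\|\psi-\tilde\psi\|_{W^{2,\infty}}+\|\psi-\tilde\psi\|_{H^3}$; these are precisely the two displayed estimates in the paper's proof. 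The one place where you go beyond the paper is the explicit recovery of $\partial_t(\Theta-\tilde\Theta)$ by differentiating the transport equation in $t$ and re-running the stability argument: the paper simply applies Corollary~\ref{cor_stab_transp} slicewise in $t$, which strictly speaking yields only $L^2_t H^1_{x,y}$ control rather than the full $H^1$ of the three-dimensional set $U$. Since the downstream use in Remark~\ref{rem_local} and Theorem~\ref{cor_NS_unique_cont} needs only the spatial gradient of $\Theta-\tilde\Theta$ (via $u=(\partial_y\Theta,-\partial_x\Theta)$), this distinction is harmless for the paper, but your extra step is what actually justifies the statement as literally written, at the price of the additional a priori bounds on $\partial_t f$ and $\partial_y\partial_t\tilde\Theta$ that you correctly identify.
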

\begin{proof}
The Sobolev embedding theorem implies that $\psi \in C^2(\overline Q)$.
In particular, the point value $\p_y \psi(p_0)$ is well-defined and there are a neighbourhood $V$ of $p_0$ and $\epsilon > 0$
such that 
$|\p_y \psi(p)| > \epsilon$ for all $p \in V$.
Define $\beta = -(\p_y \psi)^{-1} (\p_x \psi)$
in $V$, cf. (\ref{eq_transp2}), and 
    \begin{align*}
R = 1 + \norm{\beta}_{L^{\infty}(V)}.
    \end{align*}
Write $p_0 = (-a,y_0,t_0)$ and consider the set
    \begin{align*}
U 
= \{(x,y,t) \in Q : 
x \in (-a,-a+\delta),\ 
|y-y_0| < R(\delta - (x+a)),\ 
|t-t_0| < \delta\}
    \end{align*}
where $\delta > 0$ is small enough so that $\overline{U} \subset V \cap U_0$.
Choose a small enough neighbourhood $\mathcal B \subset H^4(Q)$
of $\psi$ so that $\tilde \beta = -(\p_y \tilde \psi)^{-1} (\p_x \tilde \psi)$ satisfies $\norm{\tilde \beta}_{L^{\infty}(V)} \le R$
for all $\tilde \psi \in \mathcal B$.
Now Corollary \ref{cor_stab_transp} implies the claimed estimate.
Indeed, 
    \begin{align*}
\norm{\beta - \tilde \beta}_{W^{1,\infty}(U)}
&\le C\norm{\psi - \tilde \psi}_{W^{2,\infty}(U)}
\le C \norm{\psi - \tilde \psi}_{H^4(U)}
    \end{align*}
and writing $f = (\p_y \psi)^{-1} \zeta$, with $\zeta= -\p_t \psi + \lambda \Delta \psi$, and defining $\tilde f$ analogously,
    \begin{align*}
\norm{f - \tilde f}_{H^1(U)}
&\le C \norm{\psi - \tilde \psi}_{W^{2,\infty}(U)}
+ C \norm{\psi - \tilde \psi}_{H^3(U)}.
    \end{align*}
Here the constants depend on 
    \begin{align*}
\sup_{\tilde \psi \in \mathcal B} \norm{\tilde \psi}_{H^4(Q)}
\quad \text{and} \quad 
\left(\inf_{\tilde \psi \in \mathcal B, p \in V} |\p_y \tilde\psi(p)|\right)^{-1}.
    \end{align*}
\end{proof}


\begin{remark}\label{rem_local}
Using notation from the above proposition,
it is immediate from equation \eqref{eq:vel_theta} that
\[
\norm{u - \tilde u}_{L^2(U)} \le \norm{\Theta - \tilde \Theta}_{H^1(U)}
\le C \norm{\psi - \tilde \psi}_{H^4(U)}, \quad \tilde \phi \in \mathcal B,
\]
where $(\phi, u)$ and $(\tilde \phi, \tilde u)$
both satisfy (\ref{eq_for_psi}) together with the boundary conditions (\ref{eq_phi_bd}) and $u_1=0$ on $\Sigma$.
Hence the map $\psi \mapsto u$ is locally Lipschitz continuous. \end{remark}

\subsection{Global recovery}

We recall the three cylinders inequality for the linearized
Navier--Stokes equation from \cite{LW20}, see also \cite{BIY16} for an earlier
related result. Let $u \in H^1(0,T;
H^2_{loc}(\Omega))$ be a nontrivial solution of \eqref{eq_NS} with
associated pressure $p \in L^2(0,T;H^1(\Omega))$. Let $B(x,R) \subset \R^2$ denote the ball of radius $R$ with the centre $x$.
\begin{theorem}[\cite{LW20}]\label{th_3cyl}
Let $C_0, T > 0$, $0 < R_1 < R_2 < R_3/3 < 1$, $x_0 \in \Omega$ and let $\epsilon > 0$ be small.
Suppose that $B(x_0,R_3) \subset \Omega$. Then there are $C > 0$ and $\alpha \in (0,1)$ such that
$$
\int_{\epsilon}^{T-\epsilon} \int_{B(x_0,R_2)} |u|^2 dx dt \le C 
\left( \int_{0}^{T} \int_{B(x_0,R_1)} |u|^2 dx dt \right)^{\alpha}
\left( \int_{0}^{T} \int_{B(x_0,R_3)} |u|^2 dx dt \right)^{1-\alpha}
$$
for all solutions to 
    \begin{align}\label{eq_NS_lin}
\p_t u - \nu \Delta u + (A \cdot \nabla) u + (u \cdot \nabla) B + \nabla p = 0,
\\\notag
\nabla \cdot u = 0,
    \end{align}
in $Q$ and all $A, B \in L^\infty(Q)$ satisfying 
$\norm{A}_{L^\infty(Q)} \le C_0$
and $\norm{B}_{L^\infty(Q)} \le C_0$.
\end{theorem}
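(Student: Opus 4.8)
The plan is to derive this three cylinders inequality from a Carleman estimate for the linearized Navier--Stokes operator, the standard route to quantitative unique continuation for parabolic problems. The central difficulty is the pressure $p$ together with the divergence constraint, which render the system nonlocal; moreover, since $A, B \in L^\infty(Q)$ only, one cannot take the curl of the momentum equation to eliminate $\nabla p$, as that would differentiate $A$ and $B$. Instead I would retain the pressure and control it through the elliptic equation obtained by taking the divergence of \eqref{eq_NS_lin}: using $\nabla \cdot u = 0$ one finds $\Delta p = -\nabla \cdot [(A \cdot \nabla) u + (u \cdot \nabla) B]$. The point is to keep the right-hand side in divergence form and to move derivatives off the coefficients using $\nabla \cdot u = 0$ --- for instance $(u \cdot \nabla) B = \nabla \cdot (u \otimes B)$ --- so that the source is built from $L^2$ fields involving only $u$ and the bounded coefficients $A, B$, never their derivatives. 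Elliptic regularity then controls $p$ locally in terms of $u$.

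The core of the argument is a local Carleman estimate for the heat operator $\partial_t - \nu \Delta$ with a weight $e^{2\tau \varphi}$, where $\varphi(x,t)$ is radial in $|x - x_0|$ and adapted to the parabolic scaling (for instance $\varphi(x,t) = e^{\lambda \eta(x)} \theta(t)$ with $\eta$ pseudoconvex in $x$ and $\theta$ degenerating at $t = 0, T$), and $\tau, \lambda$ are large parameters. The lower-order terms $(A \cdot \nabla) u$ and $(u \cdot \nabla) B$ --- the latter read as $\nabla \cdot (u \otimes B)$ --- are both first order in $u$ with $L^\infty$ coefficients and are absorbed into the left-hand side using the largeness of $\tau$; the gradient $\nabla p$ is reinserted by combining the parabolic estimate with the elliptic control of $p$ from the previous step.

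To pass to the three region form I would apply the Carleman estimate to $\chi u$ for a cutoff $\chi$ equal to $1$ on the annulus separating $R_1$ from $R_3$ and vanishing near the inner and outer radii. The commutator $[\partial_t - \nu \Delta, \chi] u$ is supported in the two annular regions $|x - x_0| \approx R_1$ and $|x - x_0| \approx R_3$; since $\varphi$ is monotone in $|x - x_0|$, these contributions carry exponential weights that are large on the inner annulus and small on the outer one, while on $B(x_0, R_2)$ the weight stays bounded below. Optimizing over $\tau$ to balance the $B(x_0, R_1)$ and $B(x_0, R_3)$ terms then produces the product structure with a H\"older exponent $\alpha \in (0,1)$; the time cutoff to $[\epsilon, T - \epsilon]$ is forced because the parabolic weight degenerates at $t = 0, T$, so the left-hand side must stay away from the temporal endpoints.

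The step I expect to be the main obstacle is precisely the treatment of the pressure under the sole hypothesis $A, B \in L^\infty$: one must fold the nonlocal pressure contribution into a purely local weighted estimate for $u$ without losing a power of the large parameter $\tau$, which requires the elliptic bound for $p$ to be compatible with the weight $e^{2\tau \varphi}$ and careful tracking of the cutoff commutators. This interplay between the Stokes Carleman estimate and the pressure is the technical heart of \cite{LW20}, building on the earlier \cite{BIY16}.
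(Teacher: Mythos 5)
The statement you are proving is not proved in the paper at all: Theorem~\ref{th_3cyl} is imported verbatim from \cite{LW20} (with \cite{BIY16} cited as an earlier related result), and the authors use it as a black box in the proof of Theorem~\ref{cor_NS_unique_cont}. There is therefore no internal argument to compare yours against; the ``expected'' answer here is simply the citation.

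Taken on its own terms, your outline describes the standard route to such three-cylinder (three-ball/doubling) inequalities --- a Carleman estimate for the parabolic operator with a pseudoconvex weight degenerating at $t=0,T$, cutoffs localising to the annulus between $R_1$ and $R_3$, and optimisation over the large parameter to produce the H\"older exponent --- and you correctly identify the pressure and the divergence constraint as the genuine obstruction when $A,B$ are merely $L^\infty$. But as written it is a programme, not a proof: the central weighted estimate for the Stokes system is never established, and you explicitly defer it. Two points deserve more care if you were to carry this out. First, rewriting $(u\cdot\nabla)B=\nabla\cdot(u\otimes B)$ puts the source in divergence form, and divergence-form right-hand sides enter a Carleman estimate with a \emph{weaker} power of the large parameter $\tau$ than $L^2$ sources do; your claim that this term is ``absorbed into the left-hand side using the largeness of $\tau$'' needs to track that loss, since it competes with the absorption of $\nabla p$. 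Second, the elliptic equation $\Delta p=-\nabla\cdot[(A\cdot\nabla)u+(u\cdot\nabla)B]$ determines $p$ only up to boundary data on $\p\Omega$, which is unknown here; controlling $p$ ``locally in terms of $u$'' requires a genuinely local (interior) argument compatible with the exponential weight, and this is precisely the nonlocal-to-local passage that makes \cite{LW20} nontrivial. In the context of this paper the correct move is to quote the result, as the authors do.
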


This together with the local recovery implies the following global recovery.

\begin{theorem}\label{cor_NS_unique_cont}
Let $U \subset \R^2$ be open and suppose that $\overline U \subset \Omega$. 
Let $C_0 > 0$, $t_0 \in (0,T)$ and $y_0 \in (-b,b)$.
Define 
    \begin{align*}
\mathcal U = \{u \in C^\infty(Q) : 
\text{$u$ satisfies (\ref{eq_NS}), $u_1 = 0$ on $\Sigma$, and $\norm{u}_{L^\infty(Q)} \le C_0$}\}.
    \end{align*}
Let $u \in \mathcal U$ and let $\psi \in H^4(Q)$ satisfy (\ref{eq_phi})--(\ref{eq_phi_bd})
and $\p_y \psi(p_0) \ne 0$ where $p_0 = (-a, y_0, t_0)$.
Let $U_0 \subset \overline \Omega \times (0,T)$
be a neighbourhood of $p_0$.
Then there are a neighbourhood $\mathcal B \subset H^4(Q)$
of $\psi$
and constants $\alpha \in (0,1)$ and $C,s > 0$ such that 
if $\tilde u \in \mathcal U$ and if $\tilde \psi \in \mathcal B$ satisfies (\ref{eq_phi})--(\ref{eq_phi_bd}), with $u$ replaced by $\tilde u$,
then
    \begin{align*}
\norm{u - \tilde u}_{L^2((t_0-s,t_0 + s) \times U)}
\le C \norm{\psi - \tilde \psi}_{H^4(U_0)}^\alpha.
    \end{align*}
\end{theorem}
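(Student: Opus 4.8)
The plan is to show that the difference $w=u-\tilde u$ solves a linearized Navier--Stokes system of the form \eqref{eq_NS_lin}, to extract smallness of $w$ near the boundary point $p_0$ from the local recovery of Proposition~\ref{prop_loc}, and then to propagate this smallness into the interior set $U$ by iterating the three cylinders inequality of Theorem~\ref{th_3cyl}. Writing $q=p-\tilde p$ and using
\[
(u\cdot\nabla)u-(\tilde u\cdot\nabla)\tilde u=(\tilde u\cdot\nabla)w+(w\cdot\nabla)u,
\]
one checks that $w$ is a smooth solution of \eqref{eq_NS_lin} with associated pressure $q$, $A=\tilde u$ and $B=u$. Since $u,\tilde u\in\mathcal U$ we have $\|A\|_{L^\infty(Q)},\|B\|_{L^\infty(Q)}\le C_0$, so Theorem~\ref{th_3cyl} is applicable to $w$ (if $w\equiv0$ the estimate is trivial). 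The same pointwise bound $|w|\le 2C_0$ furnishes the a priori bound $\int_0^T\int_\Omega|w|^2\le M$ with $M=4C_0^2|Q|$, which will control the large-ball factor in every application of the inequality.

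For the seed estimate, Proposition~\ref{prop_loc} together with Remark~\ref{rem_local} gives a neighbourhood $\mathcal B\subset H^4(Q)$ of $\psi$ and a cone $W=D\times(t_0-\delta,t_0+\delta)\subset U_0$ about $p_0=(-a,y_0,t_0)$, with $D$ a spatial triangle abutting $\Sigma_-$, such that for all $\tilde\psi\in\mathcal B$
\[
\|w\|_{L^2(W)}\le C\|\psi-\tilde\psi\|_{H^4(W)}\le C\|\psi-\tilde\psi\|_{H^4(U_0)}.
\]
Since $W$ is a product in time, I fix an interior spatial ball $B(z_1,R_1)\subset D\subset\Omega$ and set $I_0=(t_0-\delta,t_0+\delta)$, so that $B(z_1,R_1)\times I_0\subset W$ and hence
\[
\int_{I_0}\int_{B(z_1,R_1)}|w|^2\le C\|\psi-\tilde\psi\|_{H^4(U_0)}^2=:\eta.
\]

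To propagate $\eta$ to $\overline U$, I use that $\overline U$ is compact in the connected set $\Omega$ and $B(z_1,R_1)\subset\Omega$, so I can choose a finite chain of concentric triples $B(z_j,R_1)\subset B(z_j,R_2)\subset B(z_j,R_3)\subset\Omega$, $j=1,\dots,N$, admissible for Theorem~\ref{th_3cyl} (so $0<R_1<R_2<R_3/3<1$), with $B(z_{j+1},R_1)\subset B(z_j,R_2)$ and with $\{B(z_j,R_2)\}$ covering $\overline U$. To absorb the time-boundary loss, I run the iteration on a decreasing family of symmetric intervals $I_j=(t_0-\delta_j,t_0+\delta_j)$, $\delta_j=\delta_{j-1}-\epsilon$, picking $\epsilon<\delta/N$ so that $\delta_N>0$. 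Applying Theorem~\ref{th_3cyl} at the $j$-th triple with time interval $I_{j-1}$ transfers the bound on $I_{j-1}\times B(z_j,R_1)$ to one on $I_j\times B(z_j,R_2)$, of the form $(\text{input})^{\alpha_j}M^{1-\alpha_j}$; the overlap condition feeds step $j$ into step $j{+}1$, while the nesting $I_j\subset I_{j-1}$ supplies exactly the full-interval input the inequality requires at the next step. After $N$ steps the exponents compound to $\alpha'=\prod_{j=1}^N\alpha_j\in(0,1)$, giving, with $s=\delta_N$,
\[
\int_{(t_0-s,t_0+s)}\int_U|w|^2\le C\,\eta^{\alpha'}M^{1-\alpha'}\le C\|\psi-\tilde\psi\|_{H^4(U_0)}^{2\alpha'}.
\]
Taking square roots and setting $\alpha=\alpha'$ yields the claimed estimate.

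The principal obstacle I anticipate is the interface between the two mechanisms: the local recovery is a boundary estimate living on the degenerate cone $W$ that touches $\Sigma_-$, whereas Theorem~\ref{th_3cyl} is an interior statement requiring $B(z_j,R_3)\subset\Omega$. The delicate point is to seat an honest interior cylinder $B(z_1,R_1)\times I_0$ inside $W$ to start the chain, and then to manage the time shrinkage so that the final window $(t_0-s,t_0+s)$ is non-degenerate; the nested-interval bookkeeping above is precisely what legitimises the iteration, and the existence of a finite admissible chain reaching all of $\overline U$ follows from compactness and connectedness, provided the radii are chosen to respect the geometric constraints of Theorem~\ref{th_3cyl}.
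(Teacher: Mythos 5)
Your proposal is correct and follows essentially the same route as the paper: identify $w=u-\tilde u$ as a solution of the linearized system \eqref{eq_NS_lin} with coefficients bounded by $C_0$, seed the estimate with the local Lipschitz recovery of Proposition~\ref{prop_loc}, and propagate into $U$ by iterating the three cylinders inequality of Theorem~\ref{th_3cyl}. The only difference is that you spell out the chain-of-balls and nested-time-interval bookkeeping that the paper delegates to a citation, and you make the (equivalent) choice $A=\tilde u$, $B=u$ instead of $A=u$, $B=\tilde u$.
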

\begin{proof}
Proposition \ref{prop_loc} implies that there 
are a neighbourhood $\mathcal B \subset H^4(Q)$
of $\psi$ and a neighbourhood $U_1 \subset U_0$ of $p_0$ such that 
    \begin{align*}
\norm{u - \tilde u}_{L^2(U_1)}
\le C \norm{\psi - \tilde \psi}_{H^4(U_1)}, \quad \tilde \psi \in \mathcal B.
    \end{align*}
The difference $e = u - \tilde u$ 
satisfies (\ref{eq_NS_lin}) with 
$A=u$ and $B = \tilde u$. Indeed,
$$
(A \cdot \nabla) e + (e \cdot \nabla) B = 
(u \cdot \nabla) (u - \tilde u) + ((u - \tilde u) \cdot \nabla) \tilde u
= (u \cdot \nabla) u - (\tilde u \cdot \nabla) \tilde u.
$$
We can then apply Theorem \ref{th_3cyl} to $e$. Taking $B(x_0,R_1) \subset U_1$
it follows that for $R_2 > 0$ as in Theorem \ref{th_3cyl} there holds
$$
\int_{\epsilon}^{T-\epsilon} \int_{B(x,R_2)} |e|^2 dx dt \le C 
\left( \norm{\psi - \tilde \psi}^2_{H^4(U_0)}\right)^{\alpha}
\left( \|u\|_{L^2(Q)}^2+\|\tilde u\|_{L^2(Q)}^2  \right)^{1-\alpha}.
$$
The claim follows by iterating Theorem \ref{th_3cyl} finitely many
times (see for instance \cite{Rob91}). 
\end{proof}

\section{Concluding remarks}\label{sec:conclusion}
We have shown that for the SIV problem, the velocity
field $u$ is uniquely determined by the measured scalar field $\psi$, and $u$
depends continuously on $\psi$. The
stability is of H\"older type for the interior estimates
considered here. Due
to the nonlinearity of the map $\psi \mapsto u$, 
the scalar field in the
right hand side of the stability estimate of Theorem
\ref{cor_NS_unique_cont} is
measured in the $H^4$-norm. This is much stronger than the $L^2$-norm
of the velocities in the left hand side, but it seems unlikely that it
can be improved by much in the framework exposed here. The consequence
of this lack of balance in the estimate is that
computationally we must expect the error in the velocity to be larger
than that in the scalar field, even if $\alpha \approx 1$. 

An
outstanding challenge is to further develop the analysis so that it
allows for error estimates for a computational method. Several
building blocks for such a development are
available, for convection--diffusion equations and transport
in \cite{BNO20a, BNO20b}, for parabolic problems in \cite{BO18} and for the stationary
linearized Navier-Stokes' equation in \cite{BBFV20}. In those references
finite element methods are considered, but the arguments can be
reinterpreted in the context of spectral or Fourier methods that we
considered here.
\section*{Acknowledgment} 
EB was supported by EPSRC grants EP/P01576X/1 and EP/P012434/1, JJJG
was supported by the EPSRC grant EP/N024915/1 and LO was supported by EPSRC grants EP/L026473/1 and EP/P01593X/1.
\bibliographystyle{amsplain}

\bibliography{article}

\end{document}